\theoremstyle{plain}
\newtheorem{thm}{Theorem}
\theoremstyle{definition}
\newtheorem{quest}{Question}
\newcommand{\R}{\ensuremath{\mathbb{R}}}
\newcommand{\T}{\ensuremath{{\mathcal T}}}
\renewcommand{\epsilon}{\varepsilon}
\begin{document}

\title[Noncongruent equidissections]{Noncongruent equidissections of the plane}
\author{D.~Frettl{\"o}h}
\address{Bielefeld University, Postfach 100131, 33501 Bielefeld, Germany}
\date{\today}

\begin{abstract}
Nandakumar asked whether there is a tiling of the plane by pairwise 
non-congruent triangles of equal area and equal perimeter. Here a
weaker result is obtained: there is a tiling of the plane by pairwise
non-congruent triangles of equal area such that their perimeter is
bounded by some common constant. Several variants of the problem
are stated, some of them are answered.
\end{abstract}

\maketitle

\begin{center}
{\em Dedicated to Karoly Bezdek and to my academic brother Egon Schulte
on the occasion of their 60th birthday}
\end{center}

\section{Introduction}

There are several problems in Discrete Geometry, old and new, that can be 
stated easily but are hard to solve. Tilings and dissections provide a large 
number of such problems, see for instance \cite[Chapter C]{CFG}. On his blog
\cite{Nblog}, R.~Nandakumar asked in 2014:
\begin{quest} \label{q:frage}
``Can the plane be filled by triangles of same 
area and perimeter with no two triangles congruent to each other?''
\end{quest}
His webpage \cite{Nblog} contains several further interesting problems 
of this flavour. The main result of this paper, Theorem \ref{thm:tritil}, 
answers a weaker form of the question above affirmatively. Section 
\ref{sec:tritil} is dedicated to the statement and the proof of this result,
together with its analogues for quadrangles, pentagons and hexagons. 
Section \ref{sec:more} formulates several variants of this problem and 
gives a systematic overview. Section \ref{sec:basic} contains some basic
observations and a first result on a similar result for quadrangles. 

\subsection{Notation}
A {\em tiling} of $\R^2$ is a collection $\{T_1, T_2, \ldots \}$ 
of compact sets $T_i$ (the {\em tiles}) that is a packing of $\R^2$ 
(i.e., the interiors of distinct tiles are disjoint) as well as a 
covering of $\R^2$ (i.e. the union of the tiles equals $\R^2$).
In general, tile shapes may be pretty complicated, but for
the purpose of this paper tiles are always simple convex polygons. 
A tiling is called {\em vertex-to-vertex}, if the intersection of any two 
tiles is either an entire edge of both tiles, or a point, or empty. 
A tiling $\T$ is {\em locally finite} if any compact set in $\R^2$ 
intersects only finitely many tiles of $\T$. A tiling $\T$ is 
{\em normal} if there are $R>r>0$ such that (1) each 
tile in $\T$ contains some ball of radius $r$, and (2) each tile 
is contained in some ball of radius $R$. By \cite[3.2.1]{GS} we 
have that each normal tiling is locally finite.

\begin{figure}
\includegraphics[width=.5\textwidth]{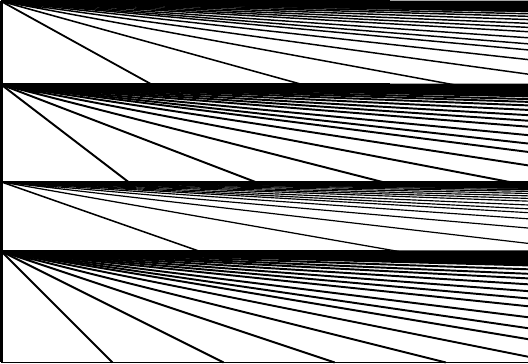}
\caption{Tiling of the plane by
pairwise non-congruent triangles of unit area. The perimeters of the 
triangles are unbounded. Moreover, the tiling is not locally finite.
\label{fig:equi-tri-strip}}
\end{figure}

\section{Basic observations} \label{sec:basic}

In Question \ref{q:frage} above, ``filled'' is to be understood in 
the sense that the plane is
covered without overlaps. In other words: is there a tiling of the plane 
by pairwise noncongruent triangles all having the same area and the same 
perimeter? If one tries to find a solution one realises that the problem
seems to be highly overdetermined. One possibility to relax the problem
is to drop the requirement on the perimeter. So one may ask ``Is there a 
tiling of the plane by pairwise noncongruent triangles all having the 
same area?'' It is not too hard to find examples. One possibility
is to partition the plane into half-strips and divide these half-strips
into triangles, as shown in Figure \ref{fig:equi-tri-strip}.

The image indicates how to fill the right half-plane by half-strips
made of triangles. If all half-strips have different widths then all 
triangles are distinct. The left half-plane can be filled in
an analogous manner. Anyway, this tiling is not locally finite:
the upper vertex of any half-strip is contained in infinitely many
triangles. So one may ask ``Is there a locally finite tiling of 
the plane by pairwise noncongruent triangles of unit area?''
Even in this stronger form the question was already answered by 
R.~Nandakumar. The image in Figure \ref{fig:equi-tri-thin} 
shows a solution, see also \cite{Wblog}. The idea is to dissect the
upper right quadrant into triangles of area $\frac{1}{2}$ by zigzagging 
between the horizontal axis and the vertical axis. The triangles become
very long and thin soon. Nevertheless they are filling the quadrant. 
For the remaining
three quadrants one uses an analogous construction, perturbing the
coordinates slightly. (For instance, stretch a copy of the first
quadrant by some irrational factor $q>1$ in the horizontal direction and
shrink it by $\frac{1}{q}$ in the vertical direction. See \cite{Wblog}
for an alternative, more detailed explanation.) 
\begin{figure}[b]
\includegraphics[width=.9\textwidth]{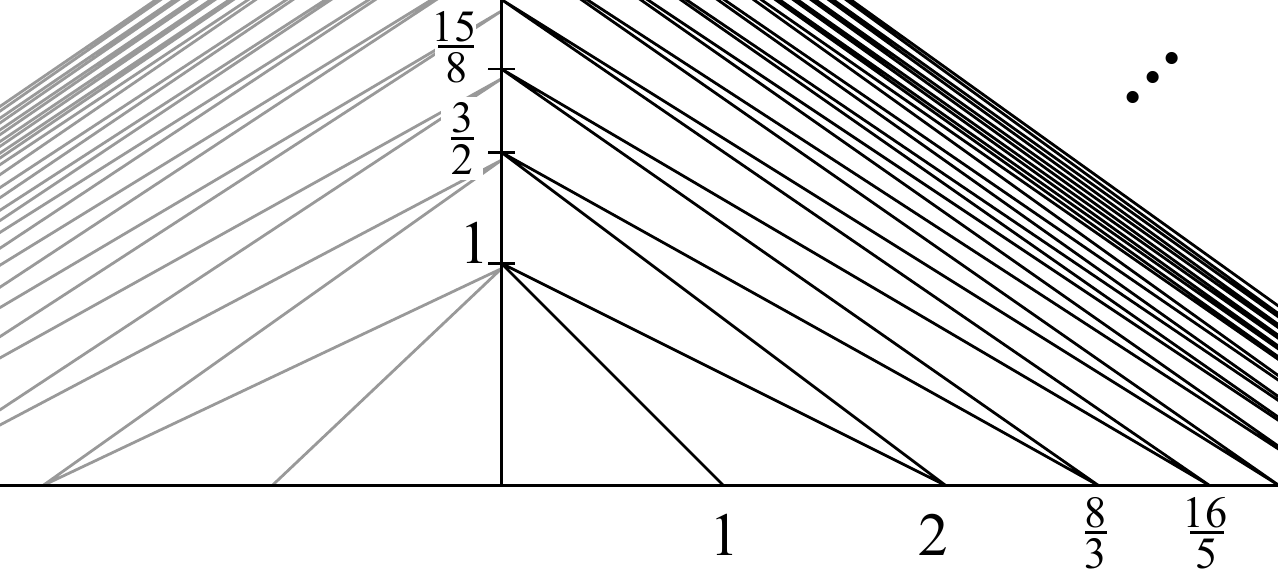}
\caption{Tiling of the plane by
pairwise non-congruent triangles of area $\frac{1}{2}$.
\label{fig:equi-tri-thin}}
\end{figure}
This tiling {\em is} locally finite.
Nevertheless, this example is not really satisfying. More precisely, this
tiling is not normal, since in this solution the perimeters of the 
triangles become arbitrary large. (Hence the inradii become
arbitrary small). So it seems natural to ask:
\begin{quest} \label{q:tritilnormal}
``Is there a normal tiling of the plane by pairwise noncongruent 
triangles of unit area?''
\end{quest}
This question was already asked by Nandakumar, in the form whether 
there is a tiling of the plane by pairwise noncongruent triangles 
all having unit area such that the perimeter of the triangles is 
bounded by some common constant. Theorem \ref{thm:tritil} below 
answers this question affirmatively.

One possible approach to find a solution is the following. If one 
can partition a set $S \subset \R^2$ into triangles of unit area 
such that (1) $S$ tiles the plane, and (2) the triangles can be 
distorted continuously, in a way such that all triangles in 
$S$ are distinct (but still having unit area), this solves the 
problem. We will illustrate this concept (where "triangle" is 
replaced by "quadrangle") in the proof of the following result.
\begin{figure}
\includegraphics[width=.20\textwidth]{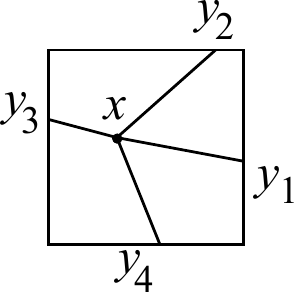}
\caption{Partitioning a square into 4 distinct quadrangles of
equal area. There are uncountably many possibilities for 
such a dissection. \label{fig:equi-quad-nonvtv}}
\end{figure}
\begin{thm} \label{thm:quad-til-nonvtv}
There is a normal tiling of the plane by pairwise non-congruent
convex quadrangles of unit area.
\end{thm}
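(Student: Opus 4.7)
The plan is to tile the plane by translates of a square $Q$ of side length $2$ and to dissect each copy of $Q$ into four convex quadrangles of unit area taken from a continuous family of dissections. The freedom in this family will allow all quadrangles across all copies to be pairwise non-congruent, while a compactness restriction on the parameters guarantees normality.

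I would describe a single dissection as in Figure~\ref{fig:equi-quad-nonvtv}: pick an interior point $p\in Q$ and four points $q_1,\ldots,q_4$, one on each edge of $Q$, and join $p$ to each $q_i$. For $p$ close enough to the centre and the $q_i$ sufficiently far from the corners, this gives four convex quadrangles. The configuration has $2+4=6$ real parameters, and the condition that each piece have area exactly $1$ amounts to three independent linear constraints (the fourth being automatic since $Q$ has total area $4$). Hence the set $\mathcal{P}$ of admissible parameters is a $3$-dimensional semialgebraic set. I would fix a compact subset $K\subset\mathcal{P}$ of nonempty interior on which every piece is strictly convex and has inradius at least some $r_0>0$; such a $K$ exists because these are open conditions satisfied by the symmetric dissection through the centre.

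Next I would enumerate the squares as $Q_1,Q_2,\ldots$ and choose parameters $k_n\in K$ for each $Q_n$ inductively. At step $n$, only finitely many quadrangles — those produced in steps $1,\ldots,n-1$, together with the mutual congruences among the four pieces within $Q_n$ — need to be avoided. For any fixed reference quadrangle $T$, the set of $k\in K$ producing a piece of $Q_n$ congruent to $T$ is cut out by finitely many polynomial equations in the entries of $k$ (isometries of the plane have three parameters and the vertex-matching equations are polynomial). Provided that this locus is not all of $K$, it is a proper real-algebraic subset, hence of Lebesgue measure zero. Since only countably many such loci arise over all $n$, their union still misses almost every $k\in K$, so a valid $k_n$ can always be chosen.

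The resulting tiling consists of pairwise non-congruent convex quadrangles of unit area; normality follows since each tile is contained in the enclosing $2\times 2$ square and contains an inscribed ball of radius $r_0$ by construction. I expect the main technical point to be the genericity claim — namely, that for any fixed $T$ the congruence locus is a \emph{proper} subvariety of the $3$-dimensional family $\mathcal{P}$. This reduces to checking that the map from parameters to, say, the multiset of side lengths of one piece is non-constant on $\mathcal{P}$, which is easily verified by perturbing a single $q_i$ while rebalancing the areas.
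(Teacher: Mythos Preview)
Your proposal is correct and follows essentially the same approach as the paper: dissect each $2\times 2$ square in a tiling into four unit-area convex quadrangles via an interior point joined to one point per edge, then exploit the resulting three-parameter family to avoid repeats. The only real difference is presentational: the paper argues informally that at each step only finitely many shapes must be avoided against uncountably many parameter choices, whereas you phrase the avoidance via proper real-algebraic loci and Lebesgue measure; both are adequate here, and your version makes the genericity step (that the congruence locus is \emph{proper}) explicit where the paper leaves it implicit.
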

\begin{proof}
Consider a square $Q$ of edge length 2. Let $x \in Q$ be a point
such that (1) for the distance $d$ of $x$ to the centre of $Q$ 
holds $0 < d < \frac{1}{10}$, and (2) $x$ is neither contained 
in the diagonals of $Q$ nor in the line segments connecting
mid-points of opposite edges of $Q$. Let $y_1$ be a point on 
the boundary of $Q$ such that for the distance $d'$ of $y$ to 
the mid-point of the edge containing $y_1$ holds $0<d'<\frac{1}{10}$.
(Here the requirements on $x$ and $y_1$ being close to the centre of
the squre resp.\ the centre of the edge are needed to exclude the 
possibility of non-convex quadrangles resp.\ triangles occurring.)

The choice of $x$ and $y_1$ determines three further unique points $y_2,
y_3, y_4$ on the boundary of $Q$ such that the line segments $xy_i$
($1 \le i \le 4$) partition $Q$ into four quadrangles of unit area. By the 
choice of $x$, avoiding the mirror axes of $Q$, it can always be achieved
that all quadrangles in a single partition of $Q$ are distinct.  
(In fact the author believes that no two congruent quadrangles
can occur in a partition where $x$ is not contained in the 
mirror axes of $Q$, but this might be tedious to prove. Here we
prefer rather to use the free parameters to achieve that all
quadrangles are distinct.) Figure \ref{fig:equi-quad-nonvtv} indicates 
such a partition.

The two coordinates determining $x$ can be changed continuously 
within a small range independently, yielding two free parameters.
One coordinate of $y_1$ can be changed continuously, too, within
some small range. Hence we obtain the desired tiling as follows:
Tile the plane $\R^2$ with copies of the square $Q$. Dissect each 
copy of $Q$ into four quadrangles of area 1, in some order. In each 
dissection, choose $x$ and $y_1$ such that the resulting quadrangles
have not shown up earlier in the construction. This is always 
possible since, in each step, there are only finitely many quadrangles 
constructed already, whereas there are uncountably many choices for 
$x$ and $y_1$.
\end{proof}
  
At this point it becomes obvious that Questions \ref{q:frage} and 
\ref{q:tritilnormal} lead to several variants. The example in 
the proof above yields a normal tiling, but in general not one 
that is vertex-to-vertex. One may ask the questions for triangles,
for quadrangles, for pentagons, and in each case with or without 
requiring ``equal perimeter'', or ``normal'', or ``vertex-to-vertex''.
The next section aims to give a systematic overview of the questions.

\section{Variants of the problem} \label{sec:more}

The general property we will require throughout the paper is that a tiling 
consists of convex tiles of unit area such that all tiles are pairwise distinct.
The tiles can be triangles (as in the original question), but also
quadrangles, rectangles, pentagons or hexagons. We may or may not
require additionally that all tiles have {\em equal perimeter}, or that the 
{\em perimeter is bounded} by some common constant, or that the tilings 
are {\em normal}, or just {\em locally finite}. Furthermore, it may be 
possible to construct a tiling analogous to the proof of Theorem 
\ref{thm:quad-til-nonvtv}, that is, by {\em tiling a tile} $S$ in infinitely 
many ways, where $S$ in turn can tile the plane. The connections 
between these properties is shown in the following diagram.
\begin{equation} \label{eq:implic} 
\left. \begin{array}{lc} 
\mbox{equal perimeter} & \Rightarrow \\
\mbox{tiling a tile}  & \Rightarrow \\
\end{array} 
\right\} \mbox{ perimeter is bounded }
\Leftrightarrow \mbox{ normal } \Rightarrow \mbox{ locally finite} 
\end{equation}
For instance, Equation \eqref{eq:implic} tells that if there is 
tiling obtained by tiling a tile $S$ in infinitely many ways, then 
the perimeters of the tiles in this tiling are bounded by some common 
constant. In turn, the latter is equivalent to the tiling being normal
(since all tiles are convex and have unit area), which in turn implies
(by \cite[3.2.1]{GS}) that the tiling is locally finite. 

These implications help to give an overview of the several variants of 
the questions. The following tables list, for each of the cases of triangles, 
convex quadrangles, convex pentagons, and convex hexagons, whether 
there is some tiling known fulfilling the properties in Equation 
\eqref{eq:implic}, and whether there is such a tiling that is even 
vertex-to-vertex. Because of the implications in Equation 
\eqref{eq:implic}, if there is "yes" in some column, then the entries 
above in the same column contain also a "yes". 

Note that ``not vtv'' is usually a weaker condition than
``vtv'', but a tiling by convex hexagons that is {\em not} vtv 
is much harder to find than one that is vtv.

\vspace{2mm}
 
\begin{tabular}{l|cc}
\hline
Triangles & vtv &  not vtv \\
\hline
locally finite & ? & yes\\
bounded perimeter & ? & yes\\
tiling a tile & ? & ? \\
equal perimeter & ? & ?\\
\hline
\end{tabular}
\quad
\begin{tabular}{l|cc}
\hline
Quadrangles & vtv &  not vtv \\
\hline
locally finite & yes & yes\\
bounded perimeter & yes & yes\\
tiling a tile & yes & yes\\
equal perimeter & ? & ?\\
\hline
\end{tabular}

\begin{tabular}{l|cc}
Pentagons & vtv &  not vtv \\
\hline
locally finite & ? & yes\\
bounded perimeter & ? & yes\\
tiling a tile & ? & yes\\
equal perimeter & ? & ?\\
\hline
\end{tabular}
\quad
\begin{tabular}{l|cc}
Hexagons & vtv & not vtv\\
\hline
locally finite & yes & ? \\
bounded perimeter & yes & ? \\
tiling a tile & yes & ? \\
equal perimeter & ? & ?\\
\hline
\end{tabular}

\vspace{2mm}

Theorem \ref{thm:quad-til-nonvtv} above proves the case ``quadrangles: tiling
a tile, not vtv'' (thus also the two cases above it in the same column in
the corresponding table). Theorem \ref{thm:tritil} below proves ``triangles: 
bounded perimeter, not vtv'', Theorem \ref{thm:quadtil} proves 
``quadrangles: tiling a tile, vtv'',  Theorem \ref{thm:pentil} proves ``hexagons: 
tiling a tile, vtv'', and Theorem \ref{thm:hextil} proves ``hexagons: tiling a tile, vtv''.

\section{Main results} \label{sec:tritil}

\begin{thm} \label{thm:tritil}
There is a normal tiling of the plane by pairwise non-congruent triangles 
of unit area.
\end{thm}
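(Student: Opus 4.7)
My plan is to mimic the ``tile-the-tile'' approach of Theorem~\ref{thm:quad-til-nonvtv}. I would look for a bounded region $S \subset \R^2$ that tiles the plane by translates and admits an uncountable continuous family of dissections into a fixed number $k$ of unit-area triangles. One then tiles the plane with copies of $S$, enumerates them as $S_1, S_2, \ldots$, and for each $S_i$ picks a dissection whose $k$ triangles did not appear in $S_1, \ldots, S_{i-1}$; at every step only countably many shapes are forbidden but uncountably many are available, so this always succeeds. Normality is automatic, since $S$ is bounded and the parameters are chosen from a compact set bounded away from degenerate configurations, so all triangles have diameter and inradius uniformly bounded.

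The key geometric step is therefore to exhibit such a master region $S$ with a genuinely positive-dimensional family of unit-area triangulations into $k$ pieces. My first attempt would be a centrally symmetric hexagon (which always tiles the plane by translates, by a classical theorem) of area $k$, triangulated by inserting a handful of interior Steiner points. Each Steiner point contributes two new coordinates while each triangle contributes one area equation, and Euler's formula links these counts, so for some combinatorial type one expects a positive-dimensional family of valid dissections. Within that family, a generic parameter value would have all $k$ triangles pairwise non-congruent, since the locus where two given triangles are congruent is a proper algebraic subset; small perturbations around the chosen value would then suffice to avoid already-constructed shapes in the greedy step.

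The main obstacle --- and, I suspect, the reason the entry ``triangles, tiling a tile'' is marked ``?'' in the table above --- is the remarkable rigidity of triangular dissections under unit-area constraints. A parallelogram of area $2$ cut by a diagonal yields two congruent triangles regardless of the parallelogram's shape; a rectangle of area $4$ with both diagonals drawn forces their intersection to be the centre and produces two pairs of congruent triangles; a hexagon of area $6$ dissected by a single interior vertex into six triangles forces that vertex to a specific location with no remaining freedom. So the crux is to find a concrete pair consisting of a master region $S$ and a combinatorial triangulation type whose parameter space is honestly of positive dimension and whose generic point has all $k$ triangles pairwise non-congruent. Should this direct approach fail, a backup plan would be to drop the ``tiling a tile'' requirement and build the tiling strip by strip, using strips of bounded but varying widths and a zig-zag dissection enriched with interior Steiner vertices at varying heights, chosen so as to produce enough freedom to distinguish all triangles while keeping all perimeters bounded.
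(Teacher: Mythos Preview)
Your proposal is not a proof but a plan, and its primary approach runs straight into an open problem. The ``tile-a-tile'' strategy for triangles is precisely the entry marked ``?'' in the table in Section~\ref{sec:more}, and your own discussion of rigidity explains why: the small concrete examples you list all collapse to congruent pairs or to a single forced configuration, and you do not produce any region $S$ and combinatorial type for which the parameter count actually works out. The dimension-count heuristic (two coordinates per Steiner point versus one equation per triangle, linked by Euler) is suggestive but not a proof; you would need to exhibit a specific $S$ and a specific triangulation, verify that the Jacobian of the area map has the right rank at some point, and check that the non-congruence locus is generic. None of this is done, so the main line of argument is a gap.

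Your backup plan is closer in spirit to what the paper actually does, but it is too vague and also somewhat misdirected. Strips of bounded width with zigzag triangulations still force the triangles to become arbitrarily long and thin as one moves along the strip, so ``bounded width'' alone does not bound the perimeters; you would need to insert new fault lines \emph{within} each strip whenever the perimeter threatens to exceed the bound. The paper's construction does exactly this, but inside each quadrant rather than in strips: zigzag between two rays until the next triangle would exceed a fixed perimeter bound~$c$, then pick a new interior point $x_k$ and a new half-line $\ell_k$ (both chosen from uncountably many possibilities, so all previously used triangle shapes can be avoided), and continue zigzagging in the two subregions. The essential idea you are missing is this recursive insertion of fault lines, which is what keeps the perimeters bounded while preserving enough freedom to avoid congruences.
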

\begin{proof}
The idea of the proof is a refinement of the construction in
Figure \ref{fig:equi-tri-thin}. Basically we add additional fault
lines in each quadrant. Moreover, we make use of some free parameter
in some range, allowing for uncountably many choices, where in
each step of the construction only finitely many triangle shapes must 
be avoided.

Choose some constant $c$ big enough. This serves as the upper bound 
on the perimeter of the triangles. For our purposes $c=100$ will do.
Consider the upper right quadrant $Q_1$. Pick a point $x_0$ on 
the positive horizontal axis with $|x_0|<\frac{c}{3}$. Let $T_1$ be the 
unique triangle in $Q_1$ with vertices $0, x_0$ and area 1. (For this and
what follows compare Figure \ref{fig:equi-tri}.) Denote the third vertex
of $T_1$ by $y_1$. Choose $y_2$ on the horizontal axis such that the
triangle $T_2$ with vertices $x_0,y_1$ and $y_2$ has area 1. Continue 
zigzagging in this way between horizontal and vertical axis. I.e.,
choose $y_{i+1}$ on the axis not containing $y_i$ such that the triangle
$T_{i+1}$ with vertices $y_{i-1},y_i,y_{i+1}$ has area 1. Repeat this until the 
next triangle $T_{i+2}$ would have perimeter larger than $c$. Omit 
$T_{i+2}$. Pick $x_1$ such that the triangle $y_i, y_{i+1},x_1$ has area 1.

There are uncountably many choices for $x_1$. For the sake of symmetry 
let $x_1$ be close to the bisector $\{(x,x) \, | \, x \in \R \}$ of $Q_1$. 
Choose a half-line $\ell_1$ proceeding from $x_1$. There are 
uncountably many choices for $\ell_1$. Again, for the sake of
symmetry, let $\ell_1$ be close to the bisector of $Q_1$. 
\begin{figure}
\includegraphics[width=.65\textwidth]{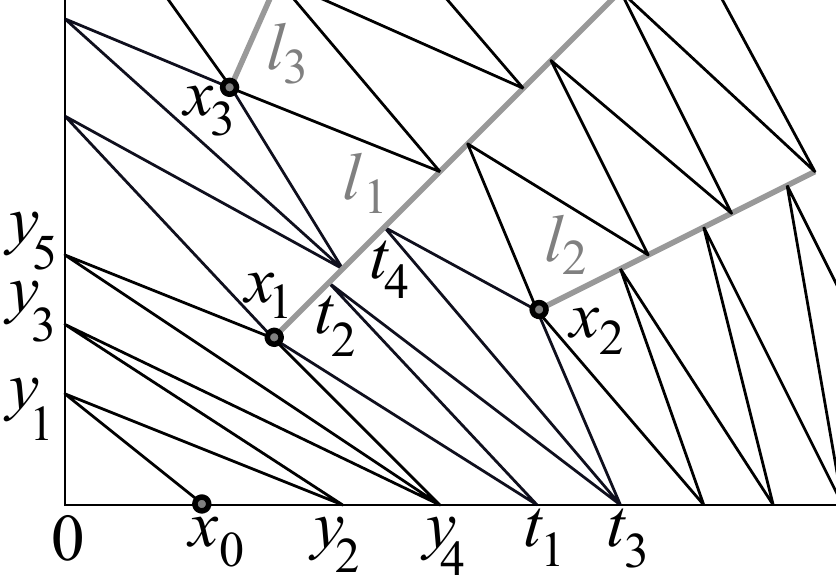}
\caption{Tiling the upper right quadrant $Q_1$ by pairwise 
non-congruent triangles of unit area and bounded perimeter. 
\label{fig:equi-tri}}
\end{figure}
Continue by zigzagging in two regions, between the horizontal axis
and $\ell_1$, and between the vertical axis and $\ell_1$. I.e., if
$y_{2k}$ is the last point on the horizontal axis, pick $t_1$ on
the horizontal axis such that the triangle $y_{2k},x_1,t_1$ has
area 1. Continue by choosing $t_2$ on $\ell_1$ such that the 
triangle $x_1,t_1,t_2$ has area 1 and so on, until the perimeter of 
the next triangle $t_i, t_{i+1},t_{i+2}$ would be larger than $c$. 
Omit this triangle. Choose $x_2 $ such that the new triangle 
$t_i,t_{i+1},x_2$ has area 1. Choose a half-line $\ell_2$ 
proceeding from $x_2$. Again there are uncountably many
choices for $x_2$ and $\ell_2$. 

Do the analogous construction in the upper region between $\ell_1$ and the 
vertical axis. Continue in this manner. Whenever a triangle occurs 
with perimeter larger than $c$ choose a new point $x_k$ and a new line 
$\ell_k$ dividing the old region into two.

The uncountability of choices for $x_k$ and $\ell_k$ ensures that
we can always avoid to add a triangle that is congruent to some 
triangle added earlier. Indeed, whenever we
are in the situation to choose $x_k$ and $\ell_k$ there are at most
countably many triangles constructed already. Hence $x_k$ can be chosen 
such that no triangle with vertex $x_k$ is congruent to an already
constructed one, and $\ell_k$ can be chosen such that no triangle
occurring in the two new regions defined by $\ell_k$ is congruent 
to an already constructed one. Hence the quadrant $Q_1$ can be tiled
by pairwise non-congruent triangles with area 1 and perimeter less 
than $c$.

The other quadrants can be tiled accordingly. Whenever a choice of
a new point and a new half-line happens there are uncountably many
possibilities, hence all (at most countably many) already constructed 
triangles can be avoided.
\end{proof}
\begin{thm} \label{thm:quadtil}
There is a normal vtv tiling of the plane by pairwise non-congruent 
quadrangles of unit area. The tiling consists of squares that are 
dissected into four distinct quadrangles of equal area.
\end{thm}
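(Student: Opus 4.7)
The idea is to refine the construction of Theorem~\ref{thm:quad-til-nonvtv} so that the resulting tiling is vertex-to-vertex. Tile $\R^2$ by a lattice of axis-aligned squares of side $2$ (hence area $4$), and dissect each square into four convex quadrangles of unit area by choosing an interior point $x$ and four boundary points $y_1,\ldots,y_4$, one on each edge, cutting along the segments $xy_i$. The vertex-to-vertex requirement forces the boundary point on each edge of the square lattice to be shared by both adjacent squares, so the dissections of neighbouring squares can no longer be chosen independently and one has to build them simultaneously.

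I would enumerate the squares in a spiral order $Q_1, Q_2, \ldots$ starting from a central square. A short case analysis along the spiral shows that, for every $k \ge 2$, the square $Q_k$ shares edges with at most two previously processed squares, and whenever it shares two such edges they are adjacent in $Q_k$ (they meet at a vertex of $Q_k$). The dissections are built inductively along this order. At step $k$ the unknowns are the interior point $x^{(k)}=(p,q)$ together with the positions of the boundary points on edges not yet touched by earlier steps; in the worst case (two predetermined boundary points) there are $4$ real unknowns subject to the four area equations. Because those four equations sum to the fixed value $4$ (the total area of $Q_k$), only three of them are independent, leaving at least a $1$-parameter family of solutions at every step.

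Maintain the inductive invariant that every already placed boundary point lies within some fixed distance $\epsilon < \tfrac{1}{10}$ of the midpoint of its edge, and that every already placed interior point lies within $\epsilon$ of the centre of its square; this keeps all quadrangles strictly convex and non-degenerate, precisely as in the proof of Theorem~\ref{thm:quad-til-nonvtv}. At the fully symmetric configuration (all boundary points at edge midpoints, $x$ at the centre) a short computation shows that three of the four area equations have an invertible Jacobian with respect to a suitable choice of three of the unknown parameters. The implicit function theorem therefore furnishes, for $\epsilon$ sufficiently small, a smooth $d$-dimensional manifold of area-preserving dissections ($d \ge 1$), together with Lipschitz control on the positions of the newly chosen boundary points in terms of the deviations of the fixed ones.

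Once such a positive-dimensional family of admissible (convex, area-$1$) dissections of $Q_k$ is in hand, non-congruence with the finitely many quadrangles produced at earlier steps is enforced exactly as in the proof of Theorem~\ref{thm:quad-til-nonvtv}: each previously constructed quadrangle imposes only finitely many algebraic conditions on the free parameters (one for each of the finitely many candidate isometries), each such condition cuts out a lower-dimensional subset of the solution manifold, and uncountably many admissible choices remain. The step that I expect to be the main obstacle, and which deserves the most care, is uniform propagation of the invariant through infinitely many induction steps: one has to fix $\epsilon$ once and for all small enough that the Lipschitz constant coming from the implicit function theorem keeps each newly determined boundary point strictly within the $\epsilon$-neighbourhood of its edge midpoint, so that the next step is again of the same form. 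Once this bookkeeping is in place, assembling the individual dissections gives the desired normal vertex-to-vertex tiling of $\R^2$ by pairwise non-congruent convex quadrangles of unit area.
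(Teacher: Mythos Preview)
Your proposal is correct and follows essentially the same strategy as the paper: build the tiling square by square, dissecting each $2\times 2$ square into four unit-area quadrangles via an interior point and four edge points, and at each step use the surviving degree(s) of freedom (two when one boundary edge is already fixed, one when two adjacent edges are fixed) to avoid congruence with the finitely many quadrangles already constructed; the only cosmetic difference is that you enumerate the squares in a spiral, whereas the paper uses a cross-then-fill order along the coordinate axes, but in both orderings every new square meets at most two earlier ones along adjacent edges. Your explicit concern about uniformly propagating the $\epsilon$-invariant through all induction steps is in fact more careful than the paper's own proof, which simply asserts that a free parameter survives at every step without discussing drift of the boundary points.
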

\begin{figure}
\includegraphics[width=.55\textwidth]{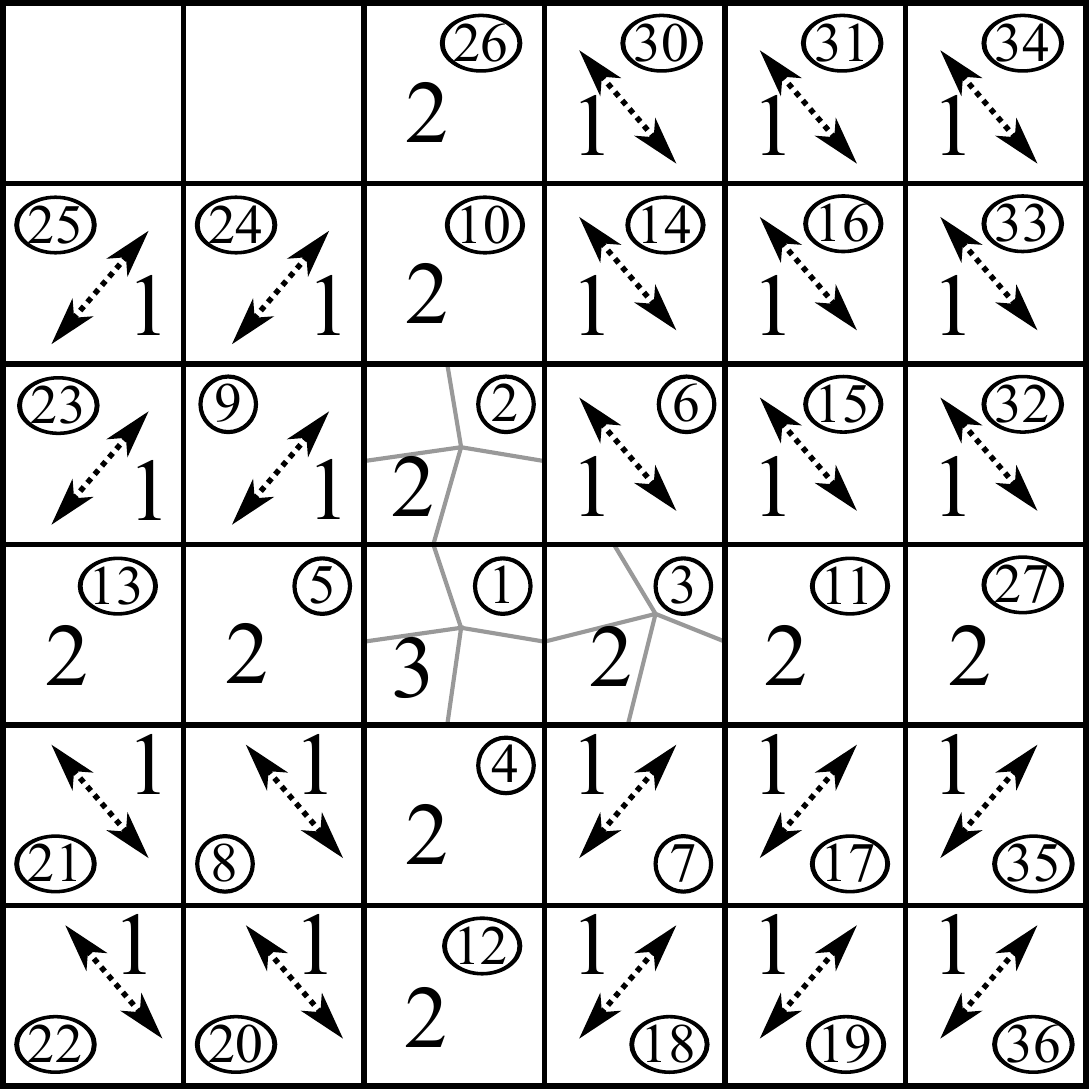}
\caption{Tiling a square with distinct quadrangles of unit area
(compare Figure \ref{fig:equi-quad-nonvtv}) can be done in 
a way such that partitions of adjacent squares are vertex-to-vertex.
Circled numbers indicate the order in which squares are added
consecutively, ordinary numbers indicate the degrees of freedom
in each square. A 2 means that the centre can be wiggled within
a small ball. A 1 means that the centre can be shifted along some
line by a small amount. The (approximate) directions of these lines 
are indicated by dashed line segments.  
\label{fig:equi-quad-vtv}}
\end{figure}
\begin{proof}
The idea is to use the construction in the proof of Theorem 
\ref{thm:quad-til-nonvtv}, adding (dissected) squares 
consecutively, using the degrees of freedom 
to achieve vertex-to-vertex in neighbouring squares. 
Figure \ref{fig:equi-quad-vtv} indicates the order in which
squares are added, and the degrees of freedom in the
dissection of each square.

Start with some square $S$, dissected as in the proof of
Theorem \ref{thm:quad-til-nonvtv}. There are three
degrees of freedom how to dissect $S$ into four quadrangles, 
two for placing the centre of dissection, one for a point on
the boundary. This square is indicated by a circled 1 in
Figure \ref{fig:equi-quad-vtv}. Add four more dissected squares 
adjacent to $S$, such that the quadrangles are vertex-to-vertex.
These squares are numbers 2 to 5 in the figure. 
In each of these squares there are still two degrees of freedom
for placing the centre. The third parameter is determined uniquely
by the vertex-to-vertex condition. Still one may use the two degrees
of freedom to avoid adding a quadrangles that is congruent to one
added already.

Now add four more squares (6 to 9), each one adjacent to two edges 
of squares 2 to 5, respectively. Now the position of two points of
the dissection are determined for each of the squares 6 to 9. 
Hence, by the area condition, the centre of the square is restricted to 
some line. Anyway, it can be shifted along a small segment of this line 
continuously. Hence there is still one free parameter that we can use 
to avoid adding a quadrangle that is congruent to some quadrangle 
added earlier.

In this way we continue filling the plane: add four squares along
the horizontal and vertical axes (the next step would be adding
squares 10 to 13
in the figure), add more squares to the pattern to complete a
square pattern. Proceeding in this way ensures that in each step 
there is at least one free parameter that can be used to avoid
adding a square congruent to one added earlier.
\end{proof}

\begin{thm} \label{thm:pentil}
There is a normal tiling of the plane by pairwise non-congruent 
pentagons of unit area. The tiling consists of hexagons that are 
dissected into three distinct pentagons of equal area.
\end{thm}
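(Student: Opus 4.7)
The plan is to mimic the proof of Theorem~\ref{thm:quad-til-nonvtv}, replacing the square master tiling by a regular hexagonal one and the dissection of a square into four quadrangles by a dissection of a hexagon into three pentagons. I would start with a tiling of~$\R^2$ by congruent regular hexagons of area~$3$; each is to be dissected into three pentagons of area~$1$. Inside a hexagon $H$ with cyclically labelled vertices $v_1,\ldots,v_6$, fix an interior point $x$ together with three points $p_1 \in v_1v_2$, $p_3 \in v_3v_4$, $p_5 \in v_5v_6$ on alternating edges, and draw the three segments $xp_1$, $xp_3$, $xp_5$. This partitions $H$ into the three regions $xp_1v_2v_3p_3$, $xp_3v_4v_5p_5$, $xp_5v_6v_1p_1$, each with five vertices. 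When $x$ is close to the centre of $H$ and each $p_i$ close to the midpoint of its edge, all three regions are convex pentagons, and in the symmetric central configuration they are congruent and each have area~$1$.

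This dissection is controlled by five real parameters ($2$ for $x$ and $1$ for each $p_i$). The equal-area condition imposes two independent constraints, since the three pentagon areas always sum to~$3$. A short shoelace computation at the symmetric configuration shows that the Jacobian of the two area constraints has full rank, so the set of admissible dissections (those producing three unit-area pentagons) is locally a three-dimensional smooth submanifold. In particular it contains an uncountable open set near the symmetric point, and its generic points break the symmetry sufficiently to make the three resulting pentagons mutually non-congruent.

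Next I would proceed inductively over some enumeration $H_1, H_2, \ldots$ of the master-tiling hexagons. At stage~$k$, only the $3(k-1)$ pentagons constructed previously need to be avoided. The parameters in the three-dimensional admissible family of $H_k$ that produce a pentagon congruent to a previously constructed one form a finite union of proper real-analytic subvarieties, hence a meagre subset; the same is true for the loci forcing two of the three new pentagons to be congruent. Choosing parameters outside all these exceptional loci, and within a fixed compact neighbourhood of the symmetric configuration that is bounded away from $\partial H_k$, yields the desired dissection of $H_k$.

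Normality is then immediate: each pentagon is contained in its parent hexagon, giving a uniform outer radius, and the compactness of the chosen parameter range gives a uniform lower bound on the inradii. The resulting tiling is not vertex-to-vertex, since the edge point $p_i$ chosen for $H_k$ does not in general coincide with the corresponding point chosen for the hexagon sharing that edge. The main obstacle is the transversality claim underpinning the parameter count, namely that the two equal-area constraints are independent near the symmetric configuration; this is a routine but unavoidable explicit computation with the shoelace formula.
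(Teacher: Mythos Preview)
Your proposal is correct and follows essentially the same approach as the paper: dissect each regular hexagon of area~$3$ in a hexagonal tiling into three pentagons of unit area via an interior point joined to points on three alternating edges, then use the uncountable parameter family to avoid repetitions. The paper's own proof is a single sentence pointing to a figure and implicitly invoking the template established in Theorem~\ref{thm:quad-til-nonvtv}; your write-up simply makes explicit the parameter count, the transversality check, and the normality bound that the paper leaves to the reader.
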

\begin{proof}
A regular hexagon of area three can be divided into three hexagons 
of unit area in uncountably many ways, compare the left part of
Figure \ref{fig:equi-pent-hex}.
\end{proof}
\begin{figure}
\includegraphics[width=.55\textwidth]{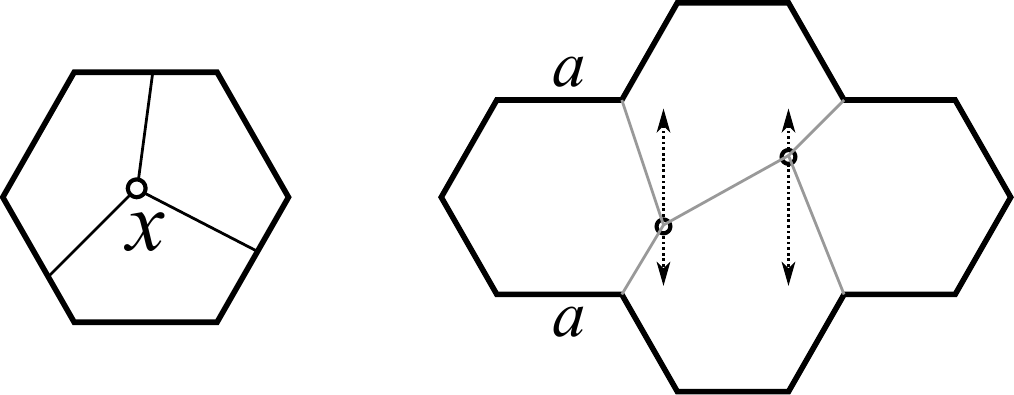}
\caption{A regular hexagon can be divided into three distinct pentagons 
of equal area in uncountably many ways (left). A non-convex 14-gon
can be divided into four distinct hexagons of equal area in uncountably
many ways. 
\label{fig:equi-pent-hex}}
\end{figure}
\begin{thm} \label{thm:hextil}
There is a normal vtv tiling of the plane by pairwise non-congruent 
hexagons of unit area. The tiling consists of non-convex 14-gons
that are dissected into four distinct hexagons of equal area.
\end{thm}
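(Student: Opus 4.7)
The plan is to mimic the strategy of Theorem \ref{thm:quadtil}, now using the dissected non\mbox{-}convex 14\mbox{-}gon shown on the right of Figure \ref{fig:equi-pent-hex} as the replacement for the square. First I would fix a concrete 14\mbox{-}gon $P$ whose shape admits a vertex\mbox{-}to\mbox{-}vertex tiling of the plane by congruent copies of $P$; a natural choice is a 14\mbox{-}gon built by fusing two regular hexagons along one edge (or a similar shape) so that translates of $P$ and of $-P$ interlock vtv. The point is that the outer boundary of $P$ is rigid, so any vtv tiling by copies of $P$ is combinatorially fixed in advance; only the interior dissection will be used to gain freedom.

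Next I would verify in detail that $P$ can be partitioned into four hexagons of equal area in a small neighbourhood of the dissection in Figure \ref{fig:equi-pent-hex}, with three real degrees of freedom: two for placing the interior ``centre'' vertex $x$ where the four hexagons meet, and one for sliding a vertex along the boundary of $P$. (The requirement that all four pieces stay convex\mbox{-}free of reflex angles and genuine hexagons restricts $x$ to an open set, but still leaves an open 3\mbox{-}parameter family of valid dissections, hence uncountably many.) This is the analogue of the three\mbox{-}parameter family used in Theorem \ref{thm:quad-til-nonvtv} and Theorem \ref{thm:quadtil}.

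Then I would fill the plane by adding copies of $P$ one at a time in a spiral/shell ordering as in Figure \ref{fig:equi-quad-vtv}, dissecting each new $P$ immediately after placing it. The first $P$ uses all three free parameters; each subsequent $P$ that is adjacent to previously dissected 14\mbox{-}gons inherits some of its boundary vertices from neighbours through the vtv condition, and each such inherited vertex together with the equal\mbox{-}area constraint imposes one linear (or piecewise\mbox{-}analytic) relation on the remaining parameters. Since congruence with any one of the finitely many already\mbox{-}placed hexagons defines a lower\mbox{-}dimensional subset of parameter space, an open piece of the constraint locus survives and an admissible dissection can still be chosen. Normality is automatic, because all tiles sit inside the fixed 14\mbox{-}gon $P$ and have unit area, bounding both circumradius and inradius.

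The main obstacle is the parameter count: one must check that after the vtv constraints with already\mbox{-}placed neighbours are imposed, at least one genuine degree of freedom remains in each step of the shell construction, so that the countably many ``forbidden'' congruence conditions can be avoided. As in the quadrangle proof, this reduces to organising the order of insertion of 14\mbox{-}gons so that a newly added $P$ is never simultaneously over\mbox{-}determined on too many sides; a shell\mbox{-}by\mbox{-}shell order analogous to the circled numbering in Figure \ref{fig:equi-quad-vtv} should suffice, and the remaining parameter can be used exactly as in the proof of Theorem \ref{thm:quadtil} to rule out congruence with any previously constructed hexagon.
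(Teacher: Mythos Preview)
Your plan diverges from the paper's argument in a way that matters. First, a concrete slip: fusing two regular hexagons along one edge yields a 10-gon, not a 14-gon, so your candidate supertile does not exist as described. The paper's 14-gon is assembled from \emph{four} hexagons --- three regular ones and a fourth obtained by stretching a regular hexagon slightly in the direction of one edge --- and the two long edges (labelled $a$) produced by the stretch are precisely what allow congruent copies of the 14-gon to be glued into biinfinite strips, which in turn tile the plane.

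More importantly, you import the machinery of Theorem~\ref{thm:quadtil} (a single ``centre'' where all four tiles meet, a sliding boundary vertex, a shell ordering, and a count of surviving parameters after vtv constraints) where the paper does something much simpler. A quick edge count shows that four hexagons meeting at one interior point would force sixteen boundary edges, not fourteen; the paper's dissection instead has exactly \emph{two} interior vertices (each of degree three, joined by an interior edge) and \emph{no} dissection vertices on the boundary of the 14-gon --- every boundary edge of each small hexagon is already a full edge of the 14-gon. Consequently the vtv condition between hexagons lying in neighbouring 14-gons is automatic: there is nothing to coordinate across 14-gon boundaries. Only \emph{one} free parameter remains (slide one interior vertex along a segment; the equal-area condition then pins the second interior vertex), but since the dissections of different 14-gons are now completely decoupled, one parameter per 14-gon already suffices to avoid the countably many forbidden congruence classes.

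So the gap in your proposal is twofold: the 14-gon you name is not a 14-gon, and the three-parameter family with a sliding boundary vertex that you posit by analogy with the square case is neither constructed nor compatible with a 14-sided outline. Your shell-ordering programme might in principle be made to work with some other supertile, but you would then have to carry out exactly the parameter bookkeeping you flag as ``the main obstacle''; the paper sidesteps that obstacle entirely by choosing the 14-gon so that all the freedom is interior.
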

\begin{proof}
Consider a non-convex 14-gon assembled from three regular 
hexagons and a fourth hexagon that is obtained from a regular 
hexagon by stretching it slightly in the direction of one of
the edges, see Figure \ref{fig:equi-pent-hex} right. The longer
edges are labelled with $a$ in the figure. This 14-gon can be 
dissected into four hexagons of equal area. There is still
one parameter of freedom: one vertex of the dissection can
be shifted continuously along a line segment, the other
interior vertex of the dissection is then determined uniquely
by the area condition. 

The 14-gons yield a tiling of the plane: gluing 14-gons together
at their edges of length $a$ yields biinfinite strips. These strips
in turn can be assembled into a tiling.
\end{proof}

During working on the problem the author tried several 
approaches. Based on this experience we want to highlight
the following problems for further study.
\begin{enumerate}
\item Is there a compact convex region in the plane that 
can be tiled by non-congruent triangles of unit area in 
infinitely many (uncountably many) ways? 
\item Is there a compact region in the plane that (a)
can be tiled by non-congruent triangles of unit area in 
infinitely many (uncountably many) ways, and (b) tiles the plane? 
\item Is there a vertex-to-vertex tiling of the plane by pairwise 
non-congruent triangles of unit area?
\item Is there a vertex-to-vertex tiling of the plane by pairwise 
non-congruent triangles of unit area such that the perimeter of the 
triangles is bounded by some common constant?
\item Is there a tiling of the plane by pairwise 
non-congruent rectangles of unit area such that the perimeter of the 
rectangles is bounded by some common constant?
\end{enumerate}

\section*{Acknowledgments}
The author expresses his gratitude to R.~Nandakumar for providing 
several interesting problems. Special thanks to Jens Schubert for helpful 
discussions on this topic during a pleasant summer weekend in Bochum.

\end{document}